\newtheorem{theorem}{Theorem}[]
\newtheorem{example}[theorem]{Example}
\newtheorem{lemma}[theorem]{Lemma}
\newtheorem{app}[theorem]{Application}
\newtheorem{corollary}[theorem]{Corollary}
\titleformat{\section}[display]{\normalfont\huge\bfseries\centering}{\centering\chaptertitlename\thechapter}{10pt}{\Large}
\titlespacing*{\section}{0pt}{0ex}{0ex}
\begin{document}
\title{Effects of Endomorphisms on the commutativity of Banach algebras} 
\author[]{Mohamed MOUMEN and Lahcen  TAOUFIQ}
\date{\today}
\address{National School of Applied Sciences
		\\Ibn Zohr University
		\\ Agadir\\ Morocco}
\email{mohamed.moumen@edu.uiz.ac.ma, l.taoufiq@uiz.ac.ma}
\maketitle

\let\thefootnote\relax
\footnotetext{MSC2020: 46J05, 16S50, 16N60.} 

\begin{abstract}
	In this article, we are interested in endomorphism's impact on the commutativity of a Banach algebra. Our research uses a topological approach, drawing on specific results from functional analysis and algebraic techniques.\;Also, we provide examples to show that we need certain assumptions.
\end{abstract} 

\bigskip
	
	{\bf 1. Introduction}
 
	In this paper, $\mathcal{A}$ denotes a Banach algebra with center  $Z(\mathcal{A})$.\;For all $x, y \in \mathcal{A},$ the symbol $[x, y]$ (resp.\;$x\circ y$)  will indicate the Lie product   $xy - yx$ (resp.\;Jordan product $xy+yx$). Recall that, $\mathcal{A}$ is prime if for any $(x,y)\in \mathcal A^2,$  $x\mathcal{A}y=0$ then either $x=0$ or $y=0$.
	An additive maps $d:\mathcal{A}\to \mathcal{A}$ is said to be  derivation if $d(xy) = d(x)y + xd(y)$ for all $x, y \in \mathcal{A}.$
	
	\hspace{0.5cm} 
	
	Numerous researchers have developed commutativity requirements for specific rings by considering circumstances that appear too weak to include commutativity. In this regard, Yood  \cite{Yood94}  has demonstrated that a prime complex Banach algebra $\mathcal{A}$ should be commutative if there are non-void open subsets $\mathcal H_1$ and  $\mathcal H_2$
	of $\mathcal{A}$ such that for any $(x,y) \in \mathcal H_1\times \mathcal H_2$  there are positive integers
	$p = p(x, y),\;q = q(x, y)$ depending on $x$ and $y,\;p > 1, q > 1,$ such
	that either $[x^p,y^q]\in Z(\mathcal{A})$ or $x^py^q \in Z(\mathcal{A}).$  Inspired by Yood's result,  Mohamed Moumen, Lahcen Taoufiq and Lahcen Oukhtite \cite{rkh} proved that if $\mathcal{A}$ is a prime Banach algebra admitting a  continuous  derivation $d$ such that  $d(x^{n}y^{m})+ [x^{n},y^{m}]\in Z(\mathcal{A})$ where  $ n = n(x, y)$ and $ m = m(x, y)$ are
	integers for sufficiently many $x$ and $y$ then $\mathcal{A}$ is commutative
	( see \cite{rkh2} and \cite{r1} for further information and examples).\\
	
	\hspace{0.5cm} 
	
	Considering these results, this paper aims to study the commutativeness of a Banach algebra $\mathcal{A}$ equipped with a continuous surjective endomorphism satisfying some specific algebraic identities on open subsets of $\mathcal{A}.$ Our topology strategy relies on Baire's category theorem and some functional analysis properties. Among our results,
	we have shown that a prime Banach algebra $\mathcal{A}$ must be commutative if for any non-empty open subsets $\mathcal{H}_{1},\mathcal{H}_{2}$ and a continuous surjective endomorphism  $f$ and any $(x,y)\in \mathcal{H}_{1}\times \mathcal{H}_{2}$ there are strictly positive integers $n, m$ such that $f(x^{n}\circ y^{m})+ x^{n}y^{m}\in Z(\mathcal{A}).$ \\
	
	{\bf 2. Main results}
	\hspace{0.5cm} 
	Throughout this article, $\mathbb{R}$ will be  the field of real
	numbers, and $\mathbb C$ will be the field of complex numbers.\\

	We start our discussion with the following well-known lemma, due to  Bonsall and  Duncan {\rm \cite{Bonsal}}, who is essential for creating the evidence for our fundamental theorems.
	\begin{lemma}\label{l1}
		
		Let $ \mathcal{A} $ be a real or complex Banach algebra and $ T(t)= \displaystyle\sum_{i=0}^{n} t^{i} a_{i} $ a polynomial in the real
		variable $t$ with coefficients in $\mathcal A.$  If  for an infinite set of real values of $t,\;T (t) \in C,$
		where $C$ is a closed linear subspace of $\mathcal A,$ then every $a_i$ lies in $C.$
		
	\end{lemma}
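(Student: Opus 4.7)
My plan is to prove this by a finite Vandermonde/Lagrange interpolation argument, which only uses that $C$ is a linear subspace (the closedness hypothesis is in fact redundant for the argument, but harmless).

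First I would select $n+1$ pairwise distinct real numbers $t_0,t_1,\ldots,t_n$ from the infinite set on which $T(t)\in C$; such a selection is possible because the set is infinite. This yields $n+1$ elements
\[
T(t_k)=\sum_{i=0}^{n} t_k^{\,i}\,a_i \in C, \qquad k=0,1,\ldots,n.
\]
Viewing this as a linear system in the unknowns $a_0,a_1,\ldots,a_n$ with coefficient matrix $V=(t_k^{\,i})_{0\le k,i\le n}$, I would invoke the classical fact that $\det V=\prod_{0\le k<\ell\le n}(t_\ell-t_k)\ne 0$ since the $t_k$ are distinct. Hence $V$ is invertible over $\mathbb{R}$.

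Next I would express each $a_i$ explicitly as a real linear combination of the $T(t_k)$. Writing $V^{-1}=(\lambda_{ik})\in M_{n+1}(\mathbb{R})$, one obtains
\[
a_i=\sum_{k=0}^{n}\lambda_{ik}\,T(t_k)\qquad (i=0,1,\ldots,n).
\]
Equivalently, one may use Lagrange interpolation to write these coefficients down in closed form. Since each $T(t_k)$ belongs to $C$ and $C$ is a (real) linear subspace of $\mathcal{A}$, the finite real linear combination on the right lies in $C$ as well. Therefore $a_i\in C$ for every $i\in\{0,1,\ldots,n\}$, completing the proof.

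There is essentially no serious obstacle here: the only point to handle with a bit of care is that the scalar field of the Banach algebra may be $\mathbb{R}$ or $\mathbb{C}$, while the inverse Vandermonde coefficients $\lambda_{ik}$ are real — this is compatible with both cases, so the linear-subspace property of $C$ is enough in either scalar setting. Noting this keeps the argument uniform and avoids any need to invoke closedness or a limiting procedure.
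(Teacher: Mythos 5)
Your proof is correct, and it is essentially the classical argument behind this lemma of Bonsall and Duncan, which the paper itself does not prove but only cites: choose $n+1$ distinct points from the infinite set, invert the Vandermonde matrix over $\mathbb{R}$, and express each $a_i$ as a real linear combination of the values $T(t_k)\in C$. Your side remark is also accurate --- only the (real-)linear subspace structure of $C$ is used, so the closedness hypothesis plays no role in this argument, and the real/complex distinction causes no difficulty since the interpolation coefficients are real.
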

	\hspace{0.5cm} 
	
	\noindent The following theorem is the first main result in this article.
	\begin{theorem}\label{a}
		If a continuous surjective endomorphism $f$ of a unitary prime Banach algebra
		$\mathcal{A}$ satisfying:
		for each $(x,y)\in \mathcal{H}_{1}\times \mathcal{H}_{2}$ there are strictly positive integers $p,q$ such that
		$\;f(x^{p}y^{q})+[x^{p},y^{q}]\in Z(\mathcal{A}),$
		then $ \mathcal{A}$ must be  commutative, where  $\mathcal{H}_{i}$  is a non-void open subset of $\mathcal{A}$ ( $i=1$ or $i=2$).
	\end{theorem}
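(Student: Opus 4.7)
The plan combines a Baire category argument with repeated use of Lemma \ref{l1}, and closes by a specialization at the unit. For each pair $(p,q)$ of positive integers, set
\[
\Omega_{p,q}=\{(x,y)\in\mathcal{H}_{1}\times\mathcal{H}_{2}: f(x^{p}y^{q})+[x^{p},y^{q}]\in Z(\mathcal{A})\}.
\]
Continuity of $f$ and of multiplication, together with the fact that $Z(\mathcal{A})=\bigcap_{a\in\mathcal{A}}\ker(\mathrm{ad}_{a})$ is closed, makes each $\Omega_{p,q}$ closed in the Baire space $\mathcal{H}_{1}\times\mathcal{H}_{2}$. The hypothesis says that these countably many sets cover $\mathcal{H}_{1}\times\mathcal{H}_{2}$, so Baire's theorem provides open balls $U\subseteq\mathcal{H}_{1}$, $V\subseteq\mathcal{H}_{2}$ and fixed exponents $p,q$ with $f(x^{p}y^{q})+[x^{p},y^{q}]\in Z(\mathcal{A})$ for all $(x,y)\in U\times V$.

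Next I would extend the identity to $\mathcal{A}\times\mathcal{A}$. Fix $(x_{0},y_{0})\in U\times V$ and $a\in\mathcal{A}$; then $x_{0}+ta\in U$ for $t$ in an open real interval, and $t\mapsto f((x_{0}+ta)^{p}y_{0}^{q})+[(x_{0}+ta)^{p},y_{0}^{q}]$ is a polynomial in $t$ of degree $p$ with coefficients in $\mathcal{A}$, taking values in the closed subspace $Z(\mathcal{A})$ for infinitely many $t$. Lemma \ref{l1} puts every coefficient in $Z(\mathcal{A})$, and the leading one gives $f(a^{p}y_{0}^{q})+[a^{p},y_{0}^{q}]\in Z(\mathcal{A})$. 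Repeating the procedure with $y_{0}\rightsquigarrow y_{0}+sb$ extends the identity to
\[
f(a^{p}b^{q})+[a^{p},b^{q}]\in Z(\mathcal{A})\quad\text{for all }a,b\in\mathcal{A}.
\]

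The last stage specializes at the unit. Surjectivity of $f$ on the unital algebra $\mathcal{A}$ gives $f(1)=1$ (from $f(1)f(a)=f(a)$ and surjectivity). Substituting $a\rightsquigarrow a+t\cdot 1$ in the preceding identity and expanding — the unit is central, so this is legitimate — yields
\[
\sum_{k=0}^{p}\binom{p}{k}t^{k}\bigl(f(a)^{p-k}f(b)^{q}+[a^{p-k},b^{q}]\bigr)\in Z(\mathcal{A})
\]
for all $t\in\mathbb{R}$. A further application of Lemma \ref{l1} isolates the coefficient at $k=p-1$: after dividing by the nonzero scalar $p$ one has $f(a)f(b)^{q}+[a,b^{q}]\in Z(\mathcal{A})$ for all $a,b$, and putting $b=1$ collapses this to $f(a)\in Z(\mathcal{A})$ for every $a$. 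Surjectivity of $f$ then forces $\mathcal{A}=Z(\mathcal{A})$, the sought commutativity.

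The main obstacle is the bookkeeping of the polynomial expansions: one must verify that after $f$ acts on the non-commutative expansion of $(x_{0}+ta)^{p}$ (or of $(a+t\cdot 1)^{p}$) the resulting expression really is a polynomial in the real parameter, and one must correctly identify the leading coefficient at each step. Note that the primeness hypothesis on $\mathcal{A}$ does not seem to be needed in this outline; if the author's argument uses it, the likely point is in avoiding the $b=1$ shortcut and instead passing from $f(b)^{q}\in Z(\mathcal{A})$ (which the $k=p$ coefficient also yields) to centrality of $f(b)$ via a prime-ring identity argument.
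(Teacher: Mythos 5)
Your proposal is correct and follows essentially the same route as the paper: a Baire category argument to fix the exponents on a product of open balls, Lemma \ref{l1} applied twice to propagate the identity to all of $\mathcal{A}\times\mathcal{A}$, and a final perturbation by the unit to conclude $f(\mathcal{A})\subseteq Z(\mathcal{A})$, whence surjectivity finishes. The only (harmless) differences are that the paper first specializes $y=x$ to get $f(x^{n+m})\in Z(\mathcal{A})$ before expanding $(e+tx)^{n+m}$, whereas you expand $a+t\cdot 1$ in the two-variable identity and then set $b=1$; and your observation that primeness is never used is consistent with the paper's own argument, which likewise does not invoke it in this proof.
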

	\begin{proof}
		For all $(p,q)\in\mathbb N^{*}\times \mathbb N^{*}$
		let's define the next ensembles:
		$$ O_{p,q}=\{(x,y)\in \mathcal{A}^{2} \; \mid \;  f(x^{p}y^{q})+ [x^{p},y^{q}]\notin Z(\mathcal{A} )\}$$    $$\mbox{and}$$     $$ F_{p,q}=\{(x,y)\in \mathcal{A}^{2} \; \mid \;  f(x^{p}y^{q})+ [x^{p},y^{q}]\in Z(\mathcal{A} )\}.$$
		Evidently $(\cap O_{p,q})\cap (\mathcal H_{1} \times \mathcal H_{2})= \varnothing,$ furthermore there exists $(x,y)\in\mathcal H_{1} \times\mathcal H_{2} $ such that  $f(x^{p}y^{q})+ [x^{p},y^{q}]\notin Z(\mathcal{A})$
		for any $(p,q)\in\mathbb N^{*}\times \mathbb N^{*},$  a contradiction.\\
		Now we prove that each $O_{p,q}$ is open in $\mathcal{A}\times\mathcal{A}$  or equivalently its complement $F_{p,q}$ is closed. For this, we consider a sequence $((x_{k},y_{k}))_{k\in\mathbb{N}}\subset F_{p,q}$ converging to $(x,y)\in\mathcal{A}\times\mathcal{A}$.
		Since $((x_{k},y_{k}))_{k\in\mathbb{N}}\subset F_{p,q}$, therefore
		\begin{center}
			$f((x_{k})^{p}(y_{k})^{q})+ [(x_{k})^{p},(y_{k})^{q}]\in Z(\mathcal{A})$ for all $k\in\mathbb{N}.$
		\end{center}
		By the assumption $f$ is continuous, limit of the sequence $(f((x_{k})^{p}(y_{k})^{q})+ [(x_{k})^{p},(y_{k})^{q}])_{k\in\mathbb{N}}$ it exactly $ f(x^{p}y^{q})+ [x^{p},y^{q}] $ and the fact that  $Z(\mathcal{A})$ is closed  ensures $ f(x^{p}y^{q})+ [x^{p},y^{q}]\in Z(\mathcal{A}).$  Therefore, $ (x,y)\in F_{p,q} $ means $ F_{p,q}$ is closed that is $O_{p,q}$ is open.\\
		In light of Baire category theorem, if every $ O_{p,q} $ is
		dense then their intersection is also dense, which  contradicts the existence of  $\mathcal H_{1}$ and $\mathcal H_{2}.$ Accordingly, there are positive integers $n, m$  such that  $ O_{n,m} $ is not dense in $\mathcal{A}$ which forces existence of a nonvoid open subset $ O\times O^{'}$ in $ F_{n,m}$
		such that
		\begin{center}
			$f(x^{n}y^{m})+ [x^{n},y^{m}]\in Z (\mathcal{A})$ for all $x\in O$, for all $y\in O^{'}.$
		\end{center}
		Fix $ y\in \mathcal O^{'}$. Let $x \in \mathcal O $ and $z\in\mathcal{A},$ then $x+tz\in \mathcal O$ for all sufficiently small real $t.$ Therefore
		$$
		P(t)= f((x+tz)^{n}y^{m})+[(x+tz)^{n},y^{m}] \in Z(\mathcal{A}).
		$$
		Since $P(t)$ may be written as
		$$
		P(t)=A_{n,0}(x,z,y)+tA_{n-1,1}(x,z,y) +t^{2} A_{n-2,2}(x,z,y)+...+t^{n}A_{0,n}(x,z,y)
		$$
		where the last term in this polynomial is $A_{0,n}(x,z,y)=f(z^{n}y^{m})+[z^{n},y^{m}],$  then Lemma \ref{l1}
		yields
		$$
		A_{0,n}(x,z,y)=f(z^{n}y^{m})+[z^{n},y^{m}]\in Z(\mathcal{A}).
		$$
		Consequently, given $ y\in \mathcal O^{'}$ we get
		$$
		f(x^{n}y^{m})+[x^{n},y^{m}]\in Z(\mathcal{A})\;\;\mbox{for all}\;\;x\in \mathcal{A}.
		$$
		Using a similar approach, we arrive at
		$$
		f(x^{n}y^{m})+[x^{n},y^{m}]\in Z(\mathcal{A})\;\;\mbox{for all}\;\;(x,y)\in \mathcal A^{2}.
		$$
		In particular,  $ f(x^{n+m})\in Z(\mathcal{A})$ for all $ x\in \mathcal{A}$. We can write
		$$
		f((e+tx)^{n+m})=\displaystyle\sum_{k=0}^{n+m} \binom {n+m}{k}t^{k}f( e^{n+m-k}x^{k})\in Z(\mathcal{A}).
		$$
		By Lemma  \ref{l1}, we have $ \binom {n+m}{k}f(e^{n+m-k}x^{k})\in Z(\mathcal{A})$ for all $0\leq k \leq n+m$ proving that $f(e^{n+m-1}x)\in Z(\mathcal{A}).$ Consequently, $f(x)\in Z(\mathcal{A})$,   that is $f(\mathcal{A})\subset Z(\mathcal{A}).$
		Since $f$ is surjective, then the assertion of the theorem.
	\end{proof}
	\noindent
	
	\hspace{0.5cm} 
	
	As two applications of Theorem \ref{a}.
	\begin{app}
		Let $n\geq 2$ be an integer and $\mathcal{H}$
		the set of all nil-potent matrices in $\mathcal M_n(\mathbb R)$ (the space of all $n$ by $n$ matrices with entries from $\mathbb{R}$).\;$\mathcal M_n(\mathbb R)$ endowed with the usual operations on matrices and the norm $\|.\|_1$ defined by 
		$\|A\|_1 =\displaystyle\sum_{1\leq i,j\leq n}|a_{ij}|$ for all $A=(a_{i,j})_{1\leq i,j\leq n},$ is  a  non-commutative prime Banach Algebra.\;Observe that,\;for every continuous automorphism $f$ of $\mathcal M_n(\mathbb R)$,
		for all $(x,y)\in \mathcal H^2$ there exists $(n,m)\in \mathbb N^2$ and $ f(x^{n}y^{m})+[x^{n},y^{m}]=0$.
		Hence,\;it follows from Theorem \ref{a} that $\mathcal{H}$ is not open in $\mathcal M_n(\mathbb R)$.
	\end{app}
	\hspace{0.5cm} 
	
	\begin{app}
		Consider the real prime Banach algebra $\mathcal{A}=\mathcal{\mathcal{M}}_{2}(\mathbb{C})$
		endowed with the norm $\Vert .\Vert_{1}$ defined by
		$$ \left\Vert \begin{pmatrix}
			a & b\\
			c & d
		\end{pmatrix}
		\right\Vert_{1}= |a|+|b| +|c|+|d|.
		$$
		Let $f: \mathcal A \longrightarrow \mathcal{A}$ be the mapping defined by $$f(\begin{pmatrix}
			a & b\\
			c & d
		\end{pmatrix})=\begin{pmatrix}
			3a-b+6c-2d & -2a+b-4c+2d\\
			3a-b+9c-3d & -2a+b-6c+3d
		\end{pmatrix}$$ for every $(a,b,c,d) \in\mathbb C^4,$ then $f$ is a continuous automorphism of $\mathcal{A}.$\\
		Now, we put 
		$ \mathcal{F}=\left\{
		\begin{pmatrix}
			t & 0 \\
			0 & t
		\end{pmatrix} \;|\; t^2=1 \; \mbox{and}\; t\in \mathbb{C}  \right\}$.\;Moreover,
		for $ A=\begin{pmatrix} a & 0 \\
			0 & a\end{pmatrix} \in\mathcal{F}$, $ B=\begin{pmatrix} b & 0 \\
			0 & b\end{pmatrix} \in\mathcal{F}$  and  for all  $ (m,n)\in\mathbb{N}^{2}$ we  have
		$$
		A^{n}B^{m}=
		\begin{pmatrix}
			a^{n}b^{m} & 0 \\
			0 & a^{n}b^{m}
		\end{pmatrix}
		,\;\;
		[A^{n},B^{m}]=
		\begin{pmatrix}
			0 & 0 \\
			0 & 0
		\end{pmatrix}$$
		consequence of which
		$$
		f(A^{n}B^{m})+[A^{n},B^{m}]\in Z(\mathcal{A}).
		$$
		It follows that the automorphism $f$ satisfies the hypotheses of  Theorem \ref{a}, however, $\mathcal{A}$ is not commutative. Consequently, $\mathcal{F}$ is not open in $\mathcal{A}$. 
	\end{app}
	\hspace{0.5cm} 
	
	In the following theorem, the endomorphism $f$ is not necessarily surjective.
	\begin{theorem}\label{b}
		Consider that in a unitary prime Banach algebra $\mathcal{A}$,  $\mathcal{H}_{1}$ and $\mathcal{H}_{2}$ two non-empty open subsets. If $\mathcal{A}$ has a continuous endomorphism $f$ satisfying:
		for all $(x,y)\in \mathcal{H}_{1}\times \mathcal{H}_{2}$ there is  $(p,q)\in\mathbb N^{*}\times \mathbb N^{*}$ and
		$f(x^{p}\circ y^{q})+x^{p}y^{q}\in Z(\mathcal{A}),$
		then A must be commutative. 
	\end{theorem}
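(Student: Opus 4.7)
The structure of the argument should track that of Theorem~\ref{a} closely. For each $(p,q)\in \mathbb{N}^{*}\times\mathbb{N}^{*}$ I would set
\[
F_{p,q} = \{(x,y)\in \mathcal{A}^{2} : f(x^{p}\circ y^{q}) + x^{p} y^{q} \in Z(\mathcal{A})\},
\]
with complement $O_{p,q}$. Continuity of $f$ and of multiplication, together with closedness of $Z(\mathcal{A})$, make each $F_{p,q}$ closed. The hypothesis forces $\bigcap_{p,q} O_{p,q}$ to miss $\mathcal{H}_{1}\times\mathcal{H}_{2}$, so the Baire category theorem produces a pair $(n,m)$ and a nonvoid open rectangle $O\times O' \subset F_{n,m}$.

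I would then fix $y\in O'$, $x\in O$, $z\in\mathcal{A}$, and consider
\[
P(t) = f\bigl((x+tz)^{n} \circ y^{m}\bigr) + (x+tz)^{n} y^{m} \in Z(\mathcal{A})
\]
for all sufficiently small real $t$. The coefficient of $t^{n}$ is $f(z^{n}\circ y^{m})+z^{n} y^{m}$, so Lemma~\ref{l1} places it in $Z(\mathcal{A})$ for every $z$. A symmetric perturbation in $y$ then upgrades the identity to
\[
f(x^{n}\circ y^{m})+x^{n} y^{m}\in Z(\mathcal{A}) \quad\text{for every }(x,y)\in\mathcal{A}^{2}, \qquad (\ast)
\]

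The route must now depart from Theorem~\ref{a}, since $f$ is not assumed surjective. Setting $x=e$ in $(\ast)$ gives $2f(y^{m})+y^{m}\in Z(\mathcal{A})$ for all $y$; substituting $y\mapsto e+tz$ and reading off the coefficient of $t$ via Lemma~\ref{l1} yields the general relation $2f(z)+z\in Z(\mathcal{A})$ for every $z\in\mathcal{A}$. Applied with $w=x^{n}\circ y^{m}$ this says $2f(w)+w\in Z(\mathcal{A})$; doubling $(\ast)$ gives $2f(w)+2x^{n} y^{m}\in Z(\mathcal{A})$, and subtracting eliminates $f$ to leave
\[
[x^{n},y^{m}]\in Z(\mathcal{A}) \quad\text{for all } x,y\in\mathcal{A}.
\]

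Finally, replacing $y$ by $e+tz$ in $[x^{n},y^{m}]\in Z(\mathcal{A})$ and applying Lemma~\ref{l1} to the coefficient of $t$ gives $[x^{n},z]\in Z(\mathcal{A})$ for all $x,z$; a further perturbation $x\mapsto e+tw$ delivers $[w,z]\in Z(\mathcal{A})$ for every $w,z\in\mathcal{A}$, and the classical fact that a prime ring with $[\mathcal{A},\mathcal{A}]\subseteq Z(\mathcal{A})$ is commutative closes the argument. The step I expect to be most delicate is algebraic rather than topological: engineering the cancellation that eliminates $f$ from $(\ast)$ so that only the pure commutator $[x^{n},y^{m}]$ survives. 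Without the intermediate observation $2f(z)+z\in Z(\mathcal{A})$, the surjectivity hypothesis used in Theorem~\ref{a} would be essential, and finding this replacement is the real content of the theorem.
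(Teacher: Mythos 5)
Your argument is correct, and the topological half (Baire category, closedness of $F_{p,q}$, the two polynomial perturbations via Lemma~\ref{l1} to reach $f(x^{n}\circ y^{m})+x^{n}y^{m}\in Z(\mathcal{A})$ for all $x,y$) coincides with the paper's. Where you genuinely diverge is in the step you correctly identify as the real content: eliminating $f$ without surjectivity. The paper does this purely algebraically, exploiting the symmetry of the Jordan product: it substitutes $x\mapsto x^{m}$, $y\mapsto y^{n}$ so both variables carry the exponent $nm$, then swaps $x$ and $y$ and subtracts; since $x^{nm}\circ y^{nm}=y^{nm}\circ x^{nm}$, the $f$-terms cancel outright and one is left with $[x^{nm},y^{nm}]\in Z(\mathcal{A})$, at which point Yood's theorem is cited. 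Your route instead sets $x=e$ to get $2f(y^{m})+y^{m}\in Z(\mathcal{A})$, linearizes via $y\mapsto e+tz$ and Lemma~\ref{l1} to obtain $2f(z)+z\in Z(\mathcal{A})$ for all $z$, and then cancels $f$ by subtracting this (evaluated at $w=x^{n}\circ y^{m}$) from twice the main identity, landing on $[x^{n},y^{m}]\in Z(\mathcal{A})$ and, after two more linearizations, on $[\mathcal{A},\mathcal{A}]\subseteq Z(\mathcal{A})$. Both are valid; the paper's cancellation is shorter and does not even need the unit at that stage, while yours costs three extra applications of Lemma~\ref{l1} but buys a stronger intermediate fact (the map $2f+\mathrm{id}$ sends all of $\mathcal{A}$ into the center) and reduces the endgame to the elementary statement that a $2$-torsion-free prime ring with all commutators central is commutative, rather than to the cited result of Yood. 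One small point to make explicit in a write-up: passing from $m\bigl(2f(z)+z\bigr)\in Z(\mathcal{A})$ to $2f(z)+z\in Z(\mathcal{A})$ uses that $Z(\mathcal{A})$ is a real linear subspace, which is harmless here.
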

	\begin{proof}
		For $(p,q)\in\mathbb N^{*}\times \mathbb N^{*}$ let us consider
		$$ O_{p,q}=\{(x,y)\in \mathcal{A}^{2}\;|\; f(x^{p}\circ y^{q})+ x^{p}y^{q}\notin Z(\mathcal{A} )\}\;\mbox{and}\;F_{p,q}=\{(x,y)\in \mathcal{A}^{2}\;|\; f(x^{p}\circ y^{q})+ x^{p}y^{q}\in Z(\mathcal{A} )\}.$$
		Using methods similar to those used in the demonstration of Theorem \ref{a}, we can prove the existence of positive integers $n$ and $m$ such that
		\begin{equation}\label{equ1}
			f(x^{n}\circ y^{m})+ x^{n}y^{m}\in Z(\mathcal{A})\;\;\;\mbox{for all}\;\; x,y\in\mathcal{A}.
		\end{equation}
		Replacing $x$ by $x^{m}$ and $y$ by $y^{n}$ in $(\ref{equ1}),$ we find that
		\begin{equation}\label{equ2}
			f(x^{nm}\circ y^{mn})+ x^{nm}y^{mn}\in Z(\mathcal{A})\;\;\;\mbox{for all}\;\; x,y\in\mathcal{A}.
		\end{equation}
		Substituting $x$ for $y$ and $y$ for $x$ in $(\ref{equ2}),$ we obviously get
		\begin{equation}\label{equ3}
			f(y^{nm}\circ x^{mn})+ y^{nm}x^{mn}\in Z(\mathcal{A})\;\;\;\mbox{for all}\;\; x,y\in\mathcal{A}.
		\end{equation}
		and subtracting $(\ref{equ3})$ from $(\ref{equ2})$, we get
		\begin{equation}\label{equ4}
			[x^{nm},y^{nm}]\in Z(\mathcal{A})\;\;\mbox{for all}\;\; x,y\in\mathcal{A}.
		\end{equation}
		By \cite{Yood94} the Banach algebra $\mathcal{A}$ is commutative.
	\end{proof}
	
	\hspace{0.5cm} 
	
	\begin{app}
		Let $\mathcal{A}=\mathcal{\mathcal{M}}_{2}(\mathbb{C})$
		be the set of $2\times 2$ matrix with matrix addition and matrix multiplication.\\ 
		For $A =\begin{pmatrix}
			a & b\\
			c & d
		\end{pmatrix}\in \mathcal{A}$, define $\Vert A \Vert_2 =(\vert a \vert^2+ \vert b\vert^2+\vert c\vert^2 +\vert d \vert^2)^\frac{1}{2}$. Then ($\mathcal{A},\Vert .\Vert_2 )$ is a normed linear space. According to {\rm \cite{an} (Application 3.1 }\big),
		$\mathcal{H}=\left\{ \begin{pmatrix}
			e^{it} & 0\\
			0 & e^{-it}
		\end{pmatrix} \mid t\in\mathbb{R}\right \}$
		is an open subset of $\mathcal{A}$  and we have
		$
		[A^n,  B^m]=0$ for all  $A,B\in \mathcal{H}$ and for all $n,m\in \mathbb{N^*}.$ Hence, it follows that $\mathcal{A}$ is not a Banach algebra under the defined norm.
		
	\end{app}
	\hspace{0.5cm} 
	
	\noindent
	The following theorems are simple to establish using a proof similar to that one with a few minor modifications.
	\begin{theorem}\label{c}
		Let $\mathcal H_{1}$ and $\mathcal H_{2}$ be two non-void open subsets of  a unitary  prime Banach algebra $\mathcal{A}.$ If $\mathcal{A}$ admits a
		continuous endomorphism $f$ satisfying any one of the following conditions:
		\\
		1. For all $(x,y)\in \mathcal H_{1}\times \mathcal H_{2}$, there exists $(p,q)\in \mathbb N^{*}\times \mathbb N^{*}$ such that
		$f([x^{p},y^{q}])+ x^{p}y^{q}\in Z(\mathcal{A}), $\\
		2. For all $(x,y)\in \mathcal H_{1}\times \mathcal H_{2}$, there exists $(p,q)\in \mathbb N^{*}\times \mathbb N^{*}$ such that
		$f([x^{p},y^{q}])+ x^{p}\circ y^{q}\in Z(\mathcal{A}),$\\
		3. For all $(x,y)\in \mathcal H_{1}\times \mathcal H_{2}$, there exists $(p,q)\in \mathbb N^{*}\times \mathbb N^{*}$ such that
		$f([x^{p},y^{q}])- x^{p}y^{q}\in Z(\mathcal{A}),$\\
		4. For all $(x,y)\in \mathcal H_{1}\times \mathcal H_{2}$, there exists $(p,q)\in \mathbb N^{*}\times \mathbb N^{*}$ such that
		$f([x^{p},y^{q}])- x^{p}\circ y^{q}\in Z(\mathcal{A}),$\\
		then $\mathcal{A}$ must be commutative.
	\end{theorem}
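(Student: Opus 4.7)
The plan is to mimic the Baire-category-plus-polynomial machinery of Theorems~\ref{a} and~\ref{b} for each of the four identities, and then finish via a symmetrization that, unlike in Theorem~\ref{b}, cancels the $f$-part rather than the non-$f$-part. For each $i\in\{1,2,3,4\}$ denote by $E_{i}(x,y,p,q)$ the corresponding expression required to lie in $Z(\mathcal{A})$. As in Theorem~\ref{a}, the sets
$$O^{(i)}_{p,q}=\{(x,y)\in\mathcal{A}^{2}\mid E_{i}(x,y,p,q)\notin Z(\mathcal{A})\}$$
are open (using continuity of $f$, continuity of the algebra operations and closedness of the center), and their intersection misses $\mathcal{H}_{1}\times\mathcal{H}_{2}$, so Baire's theorem yields $(n,m)\in\mathbb{N}^{*}\times\mathbb{N}^{*}$ together with a non-empty open rectangle $O\times O'$ on which $E_{i}(x,y,n,m)\in Z(\mathcal{A})$. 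Substituting $x+tz$ for $x$ and then $y+tw$ for $y$, and picking off the leading coefficient via Lemma~\ref{l1}, propagates the identity from $O\times O'$ to all of $\mathcal{A}^{2}$, exactly as in Theorem~\ref{a}.

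Next comes the symmetrization. I first replace $(x,y)$ by $(x^{m},y^{n})$ in the extended identity, bringing every exponent up to $nm$; call the resulting relation $(A)$. Swapping $x\leftrightarrow y$ in $(A)$ gives a relation $(B)$ whose $f$-argument is $[y^{nm},x^{nm}]=-[x^{nm},y^{nm}]$. Adding $(A)$ and $(B)$ therefore annihilates the $f$-term, and a short case-by-case check confirms that the surviving non-$f$ part is in all four conditions a nonzero integer multiple of the Jordan product, yielding
$$x^{nm}\circ y^{nm}\in Z(\mathcal{A})\quad\text{for all }x,y\in\mathcal{A}.$$

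To finish, I exploit the unitary assumption: setting $y=e$ gives $2x^{nm}\in Z(\mathcal{A})$, hence $x^{nm}\in Z(\mathcal{A})$ for every $x$. Expanding $(e+tx)^{nm}$ by the binomial theorem and reading off coefficients via Lemma~\ref{l1}, as at the end of the proof of Theorem~\ref{a}, yields $nm\cdot x\in Z(\mathcal{A})$, whence $x\in Z(\mathcal{A})$ for every $x\in\mathcal{A}$, and $\mathcal{A}$ is commutative. The only real bookkeeping is the four-way case check in the symmetrization step, namely verifying that it is the \emph{sum} (not the difference) of $(A)$ and $(B)$ that produces the Jordan-product conclusion in each case; once that is done, no genuinely new obstacle arises beyond what already appears in Theorems~\ref{a} and~\ref{b}.
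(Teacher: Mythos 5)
Your proposal is correct and is essentially the argument the paper has in mind: the paper gives no explicit proof of Theorem~\ref{c}, only the remark that it follows ``with a few minor modifications'' from Theorems~\ref{a} and~\ref{b}, and you have correctly identified what those modifications are — since the $f$-argument here is the antisymmetric bracket $[x^{nm},y^{nm}]$, one must \emph{add} the swapped identity rather than subtract it, which cancels the $f$-term and leaves a nonzero multiple of $x^{nm}\circ y^{nm}$ in $Z(\mathcal{A})$. Your endgame (setting $y=e$ to get $x^{nm}\in Z(\mathcal{A})$ and then extracting the degree-one coefficient of $(e+tx)^{nm}$ via Lemma~\ref{l1}) is exactly the device used at the end of the proofs of Theorems~\ref{a} and~\ref{m}, and is the right substitute for the appeal to Yood's theorem made in Theorem~\ref{b}, which is unavailable here because the surviving term is a Jordan product rather than a commutator.
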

	
	\hspace{0.5cm} 
	
	\begin{theorem}\label{d}
		Let $\mathcal{A}$ be a unitary  prime Banach algebra and $\mathcal{H}_{1},\mathcal{H}_{2}$  non-void open subsets of $\mathcal{A}.$
		If $\mathcal{A}$ admits a  continuous surjective endomorphism $ f$  satisfying any one of the following conditions:\\
		1. For all $(x,y)\in \mathcal H_{1}\times \mathcal H_{2}$, there exists $(p,q)\in \mathbb N^{*}\times \mathbb N^{*}$ such that $f(x^{p}y^{q})-[x^{p} , y^{q}]\in Z(\mathcal{A}),$\\
		2. For all $(x,y)\in \mathcal H_{1}\times \mathcal H_{2}$, there exists $(p,q)\in \mathbb N^{*}\times \mathbb N^{*}$ such that $f(x^{p}\circ y^{q})-x^{p}  y^{q}\in Z(\mathcal{A}),$\\
		then $ \mathcal{A} $ is commutative.
	\end{theorem}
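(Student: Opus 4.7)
The plan is to carry out, for each of the two listed conditions, the same Baire-category plus polynomial-expansion argument developed in Theorems \ref{a} and \ref{b}, and then to close the two cases by different means: surjectivity of $f$ in case (1) and the Yood-type criterion used at the end of Theorem \ref{b} in case (2).

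For both parts I would begin by defining, for each $(p,q)\in\mathbb{N}^{*}\times\mathbb{N}^{*}$, the set $O_{p,q}$ of pairs where the relevant identity fails, together with its complement $F_{p,q}$. Continuity of $f$ and closedness of $Z(\mathcal{A})$ show each $F_{p,q}$ is closed, so each $O_{p,q}$ is open. The hypothesis gives $(\bigcap_{p,q} O_{p,q})\cap(\mathcal{H}_{1}\times\mathcal{H}_{2})=\varnothing$, so by Baire's theorem some $O_{n,m}$ is not dense and $F_{n,m}$ contains a product of open sets $\mathcal{O}\times\mathcal{O}'$. Fixing $y\in\mathcal{O}'$ and writing $x\mapsto x+tz$ for small real $t$, I obtain a polynomial $P(t)$ of degree $n$ in $t$ whose values lie in $Z(\mathcal{A})$; Lemma \ref{l1} applied to its leading coefficient produces the identity for every $z\in\mathcal{A}$ and $y\in\mathcal{O}'$. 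Running the symmetric step in $y$ then extends the identity to all of $\mathcal{A}^{2}$.

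For condition (1) this yields $f(x^{n}y^{m})-[x^{n},y^{m}]\in Z(\mathcal{A})$ for all $x,y\in\mathcal{A}$. Setting $y=x$ kills the commutator and gives $f(x^{n+m})\in Z(\mathcal{A})$ for every $x$. Expanding $f((e+tx)^{n+m})$ by the binomial formula and invoking Lemma \ref{l1} once more on the coefficients yields $f(x)\in Z(\mathcal{A})$, and surjectivity of $f$ forces $\mathcal{A}=f(\mathcal{A})\subseteq Z(\mathcal{A})$. For condition (2) the same extension gives $f(x^{n}\circ y^{m})-x^{n}y^{m}\in Z(\mathcal{A})$ for all $x,y$. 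Substituting $(x,y)\mapsto(x^{m},y^{n})$ and then swapping the two variables produces two identities whose difference, using the symmetry $a\circ b=b\circ a$ of the Jordan product, collapses to $[x^{nm},y^{nm}]\in Z(\mathcal{A})$ for all $x,y\in\mathcal{A}$. Commutativity of $\mathcal{A}$ then follows from \cite{Yood94}, exactly as at the end of the proof of Theorem \ref{b}.

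The only genuinely non-routine point is the bookkeeping in the Baire step: one must verify that in the expansion of $f((x+tz)^{n}y^{m})-[(x+tz)^{n},y^{m}]$ and its Jordan analogue, the coefficient of $t^{n}$ is exactly $f(z^{n}y^{m})-[z^{n},y^{m}]$ (respectively $f(z^{n}\circ y^{m})-z^{n}y^{m}$), so that Lemma \ref{l1} extracts the clean identity we want. Once that is confirmed, both cases follow the templates of Theorems \ref{a} and \ref{b} with only notational changes.
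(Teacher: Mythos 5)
Your proposal is correct and follows exactly the route the paper intends: the paper gives no explicit proof of Theorem \ref{d}, merely remarking that it follows from the arguments of Theorems \ref{a} and \ref{b} with minor modifications, and your two cases supply precisely those modifications (the sign change is harmless in the Baire/Lemma \ref{l1} step, setting $y=x$ still kills the commutator in case (1), and the Jordan-symmetry subtraction reduces case (2) to $[x^{nm},y^{nm}]\in Z(\mathcal{A})$ and Yood's theorem). Nothing further is needed.
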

	
	\hspace{0.5cm} 
	
	\begin{app}\label{a1}
		$\mathcal{A}=\mathcal{\mathcal{M}}_{2}(\mathbb{R})$
		endowed with usual matrix addition and  multiplication and the norm $\Vert .\Vert_{1}$ defined by $ \Vert A \Vert_{1}=\displaystyle\sum_{\substack{1\leq i,j \leq 2}}\vert a_{i,j}\vert $ for all $ A=(a_{i,j})_{1\leqslant i,j \leqslant2}\in\mathcal{A}$, is a unitary prime Banach algebra.
		Let $f$ be the application defined by 
		$f(M)=P^{-1}MP$
		where $P=\begin{pmatrix} 2 & 1 \\
			-1 & 1\end{pmatrix}$.\;We 
		have $f$ as a continuous automorphism.
		Let $\mathcal{H}$ be a nonempty open subset of $\mathcal{A}$ included in $Z(\mathcal{A}).$
		For all $ (A,B) \in \mathcal{H} \times \mathcal{H}  $ and for all $(p,q)\in\mathbb{N}^{*} \times \mathbb{N}^{*} $ we have
		$ A^{p}\in Z(\mathcal{A})$ and $ B^{q}\in Z(\mathcal{A}),$ then  $f(A^{p}B^{q})- [A^{p}, B^{q}]\in Z(\mathcal{A}).$
		By Theorem $\ref{d},$ we conclude that $\mathcal{A}$ is commutative. Contradiction, consequently $\mathcal{H}=\emptyset$.\\
		We conclude that the only open subset included in $Z(\mathcal{A})$ is the empty set.
	\end{app}
	
	\hspace{0.5cm} 
	
	
	\hspace{0.5cm} 
	The Jordan product or the multiplicative law is represented by the letter  $T$  in the following theorem.
	\begin{theorem}\label{m}
		Let $\mathcal{A}$ be a unitary  prime Banach algebra and $\mathcal H_{1},\mathcal H_{2}$ non-void open subsets of $\mathcal{A}.$ Let $f$ be a continuous surjective endomorphism such that:
		For all $(x,y)\in \mathcal H_{1}\times \mathcal H_{2}$, there exists $(p,q)\in \mathbb N^{*}\times \mathbb N^{*}$ and $f(x^{p}Ty^{q})\in Z(\mathcal{A}).
		$
		Then $\mathcal{A}$ is commutative.
	\end{theorem}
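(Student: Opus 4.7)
The plan is to replay the Baire category scheme of Theorem \ref{a}. For each $(p,q)\in\mathbb{N}^*\times\mathbb{N}^*$, I would set
$$F_{p,q}=\{(x,y)\in\mathcal{A}^2 \mid f(x^pTy^q)\in Z(\mathcal{A})\},\qquad O_{p,q}=\mathcal{A}^2\setminus F_{p,q}.$$
Continuity of $f$, continuity of both candidate products (the multiplicative product and the Jordan product), and closedness of $Z(\mathcal{A})$ will make each $F_{p,q}$ closed, hence each $O_{p,q}$ open. The hypothesis of the theorem is precisely that $\bigcap_{p,q}O_{p,q}$ misses $\mathcal{H}_1\times\mathcal{H}_2$, so Baire's theorem produces integers $n,m\geq 1$ together with a non-void open rectangle $O\times O'\subset F_{n,m}$ on which $f(x^nTy^m)\in Z(\mathcal{A})$.

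My next step would be to promote this local identity to a global one, exactly as in Theorem \ref{a}. Fixing $y\in O'$, $x\in O$ and $z\in\mathcal{A}$, the element $x+tz$ lies in $O$ for all sufficiently small real $t$, so
$$P(t)=f\bigl((x+tz)^n T y^m\bigr)$$
is a polynomial in $t$ with coefficients in $\mathcal{A}$ whose leading coefficient is $f(z^n T y^m)$, and it takes values in the closed subspace $Z(\mathcal{A})$ on an infinite set of real $t$. Lemma \ref{l1} then forces every coefficient, in particular $f(z^n T y^m)$, into $Z(\mathcal{A})$. Repeating the same manoeuvre in the second variable yields
$$f(x^n T y^m)\in Z(\mathcal{A})\qquad\text{for all }(x,y)\in\mathcal{A}^2.$$

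To finish, I would specialise $y=x$: when $T$ is the multiplicative product this gives $f(x^{n+m})\in Z(\mathcal{A})$ directly, and when $T=\circ$ it gives $2f(x^{n+m})\in Z(\mathcal{A})$, which has the same consequence since $\mathcal{A}$ is real or complex. Then the unital substitution $x\mapsto e+tx$ together with the binomial formula writes $f\bigl((e+tx)^{n+m}\bigr)$ as a polynomial in $t$ whose $k$-th coefficient is $\binom{n+m}{k}f(x^k)$; Lemma \ref{l1} again pushes every coefficient into $Z(\mathcal{A})$, and reading off $k=1$ gives $(n+m)f(x)\in Z(\mathcal{A})$, hence $f(x)\in Z(\mathcal{A})$ for every $x\in\mathcal{A}$. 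Surjectivity of $f$ then upgrades $f(\mathcal{A})\subset Z(\mathcal{A})$ to $\mathcal{A}=Z(\mathcal{A})$, and $\mathcal{A}$ is commutative.

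The step I expect to need the most care is keeping the two possible meanings of the symbol $T$ in lockstep throughout the argument, so that the proof does not bifurcate. Fortunately the identity $x^n\circ x^m=2x^{n+m}$ merges the two cases cleanly at the $y=x$ specialisation, so the remaining bookkeeping is essentially identical to that of Theorem \ref{a}.
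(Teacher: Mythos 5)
Your proposal is correct and follows essentially the same route as the paper's proof: the same Baire category argument to locate a non-void open rectangle inside some $F_{n,m}$, the same polynomial perturbation with Lemma \ref{l1} to globalize the identity in each variable, the same specialisation $y=x$ merging the two meanings of $T$ into $f(x^{n+m})\in Z(\mathcal{A})$, and the same binomial expansion of $f\bigl((e+tx)^{n+m}\bigr)$ followed by surjectivity. (Incidentally, your assignment of which case of $T$ yields $f(x^{n+m})$ versus $2f(x^{n+m})$ is the correct one; the paper states them in the opposite order, which is immaterial.)
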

	\begin{proof}
		For $(p,q)\in\mathbb N^{*}\times \mathbb N^{*},$ setting
		$$
		O_{p,q}=\{(x,y)\in \mathcal A^{2}\; | \;  f(x^{p}Ty^{q})\notin Z(\mathcal{A} )\}\;\;\mbox{and}\;\;
		F_{p,q}=\{(x,y)\in \mathcal A^{2}\; |\;  f(x^{p}Ty^{q})\in Z(\mathcal{A} )\}.
		$$
		We claim that  $O_{p,q}$ is open in $\mathcal{A} \times \mathcal{A}$, that is its complement $F_{p,q}$ is closed. For this, consider a sequence $((x_{k},y_{k}))_{k\in\mathbb{N}}\subset F_{p,q}$ converging to $(x,y)\in\mathcal A^{2},$
		since $((x_{k},y_{k}))_{k\in\mathbb{N}}\subset F_{p,q}$  so that
		\begin{center}
			$f((x_{k})^{p}T(y_{k})^{q})\in Z(\mathcal{A})$ for all $k\in\mathbb{N}.$
		\end{center}
		$f$ being continuous, then the sequence  $(f((x_{k})^{p}T(y_{k})^{q}))_{k\in\mathbb{N}}$ converging to
		$ f(x^{p}Ty^{q}) $, which because of $Z(\mathcal{A})$ is closed, assures that $ f(x^{p}Ty^{q})\in Z(\mathcal{A}).$
		Accordingly, $ (x,y)\in F_{p,q} $ and thus $ F_{p,q}$ is closed which means that  $O_{p,q}$ is open.\\
		If every $ O_{p,q} $ is dense, then their intersection is also dense by
		Baire's category theorem, which contradict with $(\cap O_{p,q})\cap (\mathcal H_{1}\times \mathcal H_{2})= \varnothing$.
		Hence there are two positive integers $n$ and $m$ such that $ O_{n,m} $ is not a dense set and there exists a non-void open subset $ O \times O^{'}$ in $ F_{n,m}$  such that :
		\begin{center}
			$f(x^{n}Ty^{m})\in Z (\mathcal{A})$ for all $(x,y)\in O\times O^{'}.$
		\end{center}
		Let $y_{0} \in O $ and $y\in\mathcal{A},$ since $y_{0}+t y\in O$ for all sufficiently small $ t\in\mathbb{R} $ then \\ $f(x^{n}T(y_{0}+t y)^{m})\in Z(\mathcal{A})$.
		The expression $f(x^{n}T(y_{0}+t y)^{m}) $ can be written as:
		$$f(x^{n}T(y_{0}+ty)^{m})=A_{m,0}(x,y_0,y)+A_{m-1,1}(x,y_0,y)t + A_{m-2,2}(x,y_0,y)t^{2}+...+A_{0,m}(x,y_0,y)t^{m}.$$
		Consequently, given $x\in O$ by Lemma \ref{l1}  we have $f(x^{n}Ty^{m})\in Z(\mathcal{A})$ for all $y\in \mathcal{A}.$
		Reversing the roles  $O$ and $O^{'},$ we easily get
		$$
		f(x^{n}Ty^{m})\in Z(\mathcal{A}) \;\;\mbox{for all} \;\;(x,y )\in\mathcal A^2.
		$$
		If $T$ denotes the Jordan product (resp. the ordinary product), then the last expression becomes  $f(x^{n+m})\in Z(\mathcal{A})$ (resp. $2f(x^{n+m})\in Z(\mathcal{A}))\;$  for all $x\in\mathcal{A}.$ Hence, in both cases we obtain
		$f(x^{n+m})\in Z(\mathcal{A}) $ for all $x\in\mathcal{A}.$ Arguing in a similar way as in the proof of  Theorem $\ref{a},$
		$\mathcal{A}$ is commutative.
	\end{proof}
	\noindent
	\hspace{0.5cm} 
	
	With the following corollary, we bring this discussion to an end.
	\begin{corollary}\label{d1}
		Let $\mathcal{A}$ be a unitary  prime Banach algebra and $D$  a part dense in  $\mathcal{A}.$
		If $\mathcal{A}$ admits a continuous surjective endomorphism $ f $ such that: There exists $(p,q)\in \mathbb N^{*}\times \mathbb N^{*}$ and $f(x^{p}y^{q})+[x^{p},y^{q}]\in Z(\mathcal{A})$ for all $x,y\in D,$
		then $\mathcal{A}$ is commutative.
	\end{corollary}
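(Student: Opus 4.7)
The plan is to propagate the algebraic identity from the dense subset $D$ to the whole of $\mathcal{A}$, and then invoke Theorem~\ref{a} directly. The observation is that the hypothesis of the corollary is \emph{stronger} than that of Theorem~\ref{a} once the restriction $x,y\in D$ has been removed, since here a single pair $(p,q)$ is assumed to work universally.

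First I would introduce the map $G:\mathcal{A}^{2}\to\mathcal{A}$ defined by $G(x,y)=f(x^{p}y^{q})+[x^{p},y^{q}]$, with the fixed pair $(p,q)$ provided by the hypothesis. Because multiplication in a Banach algebra is jointly continuous and $f$ is continuous, $G$ is continuous on $\mathcal{A}^{2}$. The center $Z(\mathcal{A})$ is closed, being the intersection over $z\in\mathcal{A}$ of the kernels of the continuous linear maps $a\mapsto [a,z]$. Consequently $G^{-1}(Z(\mathcal{A}))$ is a closed subset of $\mathcal{A}^{2}$ which, by assumption, contains $D\times D$.

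Next, since $D$ is dense in $\mathcal{A}$, the product $D\times D$ is dense in $\mathcal{A}^{2}$. A closed set that contains a dense subset equals the whole space, so $G^{-1}(Z(\mathcal{A}))=\mathcal{A}^{2}$, which gives
$$ f(x^{p}y^{q})+[x^{p},y^{q}]\in Z(\mathcal{A}) \quad \text{for all } x,y\in\mathcal{A}. $$

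Finally, I would take $\mathcal{H}_{1}=\mathcal{H}_{2}=\mathcal{A}$ as non-void open subsets of $\mathcal{A}$ and observe that the hypotheses of Theorem~\ref{a} are fulfilled with the constant choice of integers $(p,q)$ for every pair in $\mathcal{H}_{1}\times\mathcal{H}_{2}$. Theorem~\ref{a} then yields the commutativity of $\mathcal{A}$. There is no genuine obstacle here: the corollary is essentially a density-extension wrapper around the already-proved theorem, and the only thing to double-check is the joint continuity of $(x,y)\mapsto x^{p}y^{q}$ and of $(x,y)\mapsto [x^{p},y^{q}]$, both of which follow immediately from the Banach algebra axioms.
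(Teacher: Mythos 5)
Your proposal is correct and follows essentially the same route as the paper: extend the identity from $D\times D$ to $\mathcal{A}^{2}$ using continuity of $f$ and the algebra operations together with the closedness of $Z(\mathcal{A})$, then apply Theorem~\ref{a} with $\mathcal{H}_{1}=\mathcal{H}_{2}=\mathcal{A}$. The only difference is presentational — you phrase the density argument via preimages of closed sets where the paper uses convergent sequences — and your explicit justification that $Z(\mathcal{A})$ is closed is a welcome addition.
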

	\begin{proof}
		Let $ x,y\in\mathcal{A},$ there exist  two  sequences $(x_{k})_{k\in\mathbb{N}}$ and $(y_{k})_{k\in\mathbb{N}}$  in $ D $ converging to $ x$ and $y.$
		Since $(x_{k})_{k\in\mathbb{N}}\subset D $ and $(y_{k})_{k\in\mathbb{N}}\subset D $, then
		\begin{center}
			$f((x_{k})^{p}(y_{k})^{q})+[(x_{k})^{p},(y_{k})^{q}]\in Z(\mathcal{A})$ for all $k\in\mathbb{N}.$
		\end{center}
		Given that $d$ is continuous, the sequence  $(f((x_{k})^{p}(y_{k})^{q})+[(x_{k})^{p},(y_{k})^{q}])_{k\in\mathbb{N}}$ converges to
		$ f(x^{p}y^{q})+[x^{p},y^{q}] $, knowing that $Z(\mathcal{A})$ is closed, then $ f(x^{p}y^{q})+[x^{p},y^{q}]\in Z(\mathcal{A}).$
		We conclude that:
		there exists $(p,q)\in\mathbb N^{*}\times \mathbb N^{*}$ and  $f(x^{p}y^{q})+[x^{p},y^{q}]\in Z(\mathcal{A})$ for all $x,y\in\mathcal{A}.$
		By Theorem $\ref{a},$ we achieve the desired result.
	\end{proof}
	{\bf 3. Some examples}

	\hspace{0.5cm} 
	The following example shows that the surjectivity of the endomorphism $f$ is necessary for the hypotheses of Theorem \ref{m}.
	\begin{example}
		Consider the ring  $\mathcal{ A}=
		\left\{
		\begin{pmatrix}
			a & 0\\
			a&b
		\end{pmatrix} \;|\; a,b\in \mathbb{R} \right\}$, provided with matrix addition, matrix multiplication and the norm defined by $ \left\Vert \begin{pmatrix}
			a&0 \\
			a&b\\
		\end{pmatrix} \right\Vert_{1}= |a|+|a| +|b|$. Then, clearly $\mathcal{ A}$ is prime Banach algebra.\;Let $f$ be the
		additive mapping is defined by
		$$f(\begin{pmatrix}
			a&0 \\
			a&b\\
		\end{pmatrix})=\begin{pmatrix}
			a&0 \\
			0&a\\
		\end{pmatrix}$$
		for all $(a,b)\in \mathbb R^2$. It is obvious to see that $f$ is a continuous  endomorphism non-surjective on $\mathcal{ A}$ and we have $f(A^n B^m)\in Z(\mathcal{ A})$  ( $f(A^n \circ B^m)\in Z(\mathcal{ A})$) for every $A,B\in \mathcal{ A}$ and for every $n,m\in\mathbb N^*$. However, $\mathcal A$ is not commutative.
	\end{example}
	\hspace{0.5cm} 
	\noindent
	The following example shows that the primeness of $\mathcal{A}$ in our theorems can not be omitted.
	\begin{example}
		Let  
		$\mathcal{ A}=
		\left\{
		\begin{pmatrix}
			0 & a&b \\
			0 & 0&c\\
			0&0&0
		\end{pmatrix} \;|\; a,b,c \in \mathbb{R} \right\}.$
		Then $\mathcal{ A}$ is a Banach algebra under the norm defined by
		$ \left\Vert \begin{pmatrix}
			0 & a&b \\
			0 & 0&c\\
			0&0&0
		\end{pmatrix} \right\Vert_{1}= |a|+|b| +|c|$. We also have, if $a,b,c\in\mathbb{R}$, then
		$$
		\begin{pmatrix}
			0 & 0&1 \\
			0 & 0&0\\
			0&0&0
		\end{pmatrix} \begin{pmatrix}
			0 & a&b \\
			0 & 0&c\\
			0&0&0
		\end{pmatrix} \begin{pmatrix}
			0 & 0&1 \\
			0 & 0&0\\
			0&0&0
		\end{pmatrix}=\begin{pmatrix}
			0 & 0&0 \\
			0 & 0&0\\
			0&0&0
		\end{pmatrix}
		$$
		which proves that $\mathcal{A}$ is a not prime Banach Algebra.
		Furthermore, for all $(A,B) \in\mathcal A^{2}$ we have :\\
		1. $Id(A^{3}B^{3})+[A^{3},B^{3}]\in Z(\mathcal{A})$ and $Id(A^{3}B^{3})-[A^{3},B^{3}]\in Z(\mathcal{A}),$  \\
		2. $Id(A^{3}\circ B^{3})+A^{3}B^{3}\in Z(\mathcal{A})$ and $Id(A^{3}\circ B^{3})-A^{3}B^{3}\in Z(\mathcal{A}),$  \\
		3. $Id([A^{3},B^{3}])+A^{3}B^{3}\in Z(\mathcal{A})$ and $Id([A^{3},B^{3}])-A^{3}B^{3}\in Z(\mathcal{A}),$ \\
		4. $Id([A^{3},B^{3}])+A^{3}\circ B^{3}\in Z(\mathcal{A})$ and $Id([A^{3},B^{3}])-A^{3}\circ B^{3}\in Z(\mathcal{A}),$  \\
		5. $Id(A^{3}B^{3})\in Z(\mathcal{A}),$ \\
		6. $Id(A^{3}\circ B^{3})\in Z(\mathcal{A}).$\\
		As a result, $\mathcal{A}$ is not commutative even if $Id$ meets the hypothesis of all Theorems.
	\end{example}
	
	\hspace{0.5cm}
	
	\noindent
	\noindent
	\noindent
	The following example indicates that our Theorems' premise that "$\mathcal{H}_{1}$ and $\mathcal{H}_{2}$ are open" cannot be modified.
	\begin{example}
		Consider the real  prime  Banach algebra $\mathcal{A}=\mathcal{\mathcal{M}}_{2}(\mathbb{R})$ endowed with $\Vert .\Vert_{1}$ defined  in Example 1.
		Let
		$ \mathcal{F}=\left\{
		\begin{pmatrix}
			0 & a \\
			0 & 0
		\end{pmatrix} \;|\; a \in \mathbb{R}  \right\}.$ We claim that
		$\mathcal{F}$
		is a closed subspace  of $\mathcal{A}$. Indeed, we consider a sequence
		$(A_n)_{n \in \mathbb{N^{*}}}$ in $\mathcal{F}$
		converges to $A\in\mathcal{A}$.\\
		Since $(A_n)_{n \in \mathbb{N^{*}}}\subset \mathcal{F}$, therefore, there is $(a_n)_{n \in \mathbb{N^{*}}}\subset \mathbb{R}$ such that 
		$$ A_n= \begin{pmatrix} 0 & a_n \\
			0 & 0\end{pmatrix} \;\; \forall n \in \mathbb{N^*} .$$
		As the sequence $(A_n)_{n \in \mathbb N^{*}}$ is convergent, then the sequence  $(a_n)_{n \in \mathbb{N^{*}}}$ is also convergent to a real $a$ and $A=\begin{pmatrix} 0 & a \\
			0 & 0\end{pmatrix}.$
		Let $f$ be the  automorphism of $\mathcal{A}$ defined by  $f(M)=QMQ^{-1}$ for all $M\in\mathcal{A}$ where  
		$Q=
		\begin{pmatrix}
			2 & 1 \\
			1 & 2
		\end{pmatrix}.$  It is easy to see that $f$ is a  continuous automorphism. Moreover,
		for $ A \in\mathcal{F}$  and $B \in
		\mathcal{F} $, we  have
		$$
		A^{2}B^{2}=
		\begin{pmatrix}
			0 & 0 \\
			0 & 0
		\end{pmatrix}
		,\;\;
		A^{2}\circ B^{2}=
		\begin{pmatrix}
			0 & 0 \\
			0 & 0
		\end{pmatrix},\;\;
		[A^{2},B^{2}]=
		\begin{pmatrix}
			0 & 0 \\
			0 & 0
		\end{pmatrix}$$
		consequence of which
		$$
		f(A^{n}\circ B^{m})=f([A^{n},B^{m}])=f(A^{n}B^{m})=\begin{pmatrix} 0 & 0 \\
			0 & 0\end{pmatrix}.
		$$
		Using the fact that \\
		1. $f(A^{n}B^{m})+[A^{n},B^{m}]\in Z(\mathcal{A})$ and $f(A^{n}B^{m})-[A^{n},B^{m}]\in Z(\mathcal{A}),$ \\
		2. $f(A^{n}\circ B^{m})+A^{n}B^{m}\in Z(\mathcal{A})$ and $f(A^{n}\circ B^{m})-A^{n}B^{m}\in Z(\mathcal{A}),$  \\
		3. $f([A^{n},B^{m}])+A^{n}B^{m}\in Z(\mathcal{A})$ and $f([A^{n},B^{m}])-A^{n}B^{m}\in Z(\mathcal{A}),$   \\
		4. $f([A^{n},B^{m}])+A^{n}\circ B^{m}\in Z(\mathcal{A})$ and $f([A^{n},B^{m}])-A^{n}\circ B^{m}\in Z(\mathcal{A}),$   \\
		5. $f(A^{n}B^{m})\in Z(\mathcal{A}),$  \\
		6. $f(A^{n}\circ B^{m})\in Z(\mathcal{A}).$ 
		\hspace{0.5cm} 
		
		It follows that $f$ satisfies the hypotheses of our theorems, however $\mathcal{A}$ is not commutative.
	\end{example}
	The example below demonstrates that the conclusion of our theorems does not necessarily have to be true for a Banach algebra over $\mathbb{F}_3$.
	\begin{example}
		Let $\mathbb{F}_3$ be the field $\mathbb{Z}/3\mathbb{Z}$ and $\mathcal{A}=\mathcal{M}_{2}(\mathbb{F}_3)$. Clearly, $\mathcal{A}$ is  2-torsion free prime  Banach algebra aver $\mathbb{F}_3$  with norm $\Vert . \Vert_{1}$  defined by 
		$ \Vert A \Vert_{1}= 
		\displaystyle \sum_{1\leq i,j \leq 2} \vert a_{i,j}\vert $ for  $ A=(a_{i,j})_{1\leqslant i,j \leqslant2}\in\mathcal{A}$  where
		$\vert. \vert$  is the norm defined on $\mathbb{F}_3$ by:
		\begin{center}
			$\vert \overline{0}\vert =0 $, $\vert \overline{1}\vert =1 $ and $\vert \overline{2}\vert =2 .$
		\end{center}
		Observe that
		$\mathcal{ H}=
		\left\{
		\begin{pmatrix}
			a & 0 \\
			0 & a
		\end{pmatrix} \;|\; a \in \mathbb{Z}/3\mathbb{Z} \right\}$ is an open subset of $\mathcal{A}.$ Indeed, for $A \in \mathcal{ H}$  the open ball $B (A,1)=\{X\in \mathcal{A}$ such that $ \Vert A-X \Vert_{1}< 1 \}=\{A\} \subset \mathcal {H}.$ \\
		Furthermore, for  all  $(n,m)\in\mathbb N^{*^2}$ and for all $(A,B) \in\mathcal H^{2}$ we have :\\
		1. $Id(A^{n}B^{m})+[A^{n},B^{m}]\in Z(\mathcal{A})$ and $Id(A^{n}B^{m})-[A^{n},B^{m}]\in Z(\mathcal{A}),$  \\
		2. $Id(A^{n}\circ B^{m})+A^{n}B^{m}\in Z(\mathcal{A})$ and $Id(A^{n}\circ B^{m})-A^{n}B^{m}\in Z(\mathcal{A}),$  \\
		3. $Id([A^{n},B^{m}])+A^{n}B^{m}\in Z(\mathcal{A})$ and $Id([A^{n},B^{m}])-A^{n}B^{m}\in Z(\mathcal{A}),$ \\
		4. $Id([A^{n},B^{m}])+A^{n}\circ B^{m}\in Z(\mathcal{A})$ and $Id([A^{n},B^{m}])-A^{n}\circ B^{m}\in Z(\mathcal{A}),$  \\
		5. $Id(A^{n}B^{m})\in Z(\mathcal{A}),$ \\
		6. $Id(A^{n}\circ B^{m})\in Z(\mathcal{A}).$\\
		This means that $Id$ satisfies the hypotheses of our theorems but $\mathcal{A}$ is not commutative.
	\end{example}

	\noindent
	\begin{example}
		Let  $\mathcal{A}=\mathcal{\mathcal{M}}_{2}(\mathbb{C})$. 
		$\mathcal{A}$ is a prime complex Banach algebra non-commutative. 
		Furthermore we have $\mathcal H=\{\begin{pmatrix} a& b \\
			c & d \end{pmatrix}\in\mathcal{A}$ such that $ad-bc\neq 0 \}$ is open in $\mathcal{A}$.\\
		Take  $A=\begin{pmatrix} 1& 1 \\
			0 & 1 \end{pmatrix}\in \mathcal H $ and $B=\begin{pmatrix} 1& 0 \\
			0 & 1 \end{pmatrix}\in H,$
		then $A^{p}=\begin{pmatrix} 1& p \\
			0 & 1 \end{pmatrix}$ and $B^{q}=\begin{pmatrix} 1& 0 \\
			0 & 1 \end{pmatrix}$
		therefore $Id(A^{p} B^{q})+[A^{p}, B^{q}] =\begin{pmatrix} 0& 2p \\
			0 & 0 \end{pmatrix}\notin\ Z(\mathcal{A})$ for all $p,q \in\mathbb{N^{*}}.$\\
		This show that the condition $f(A^{p}\circ B^{q})+[A^{p}, B^{q}]\in Z(\mathcal{A})$ "for all $A\in \mathcal H$" 
		in Theorem $\ref{a}$, is not superfluous.
	\end{example}

\end{document}